\newtheorem{thm}{Theorem}[section] %the resolution could also be [subsection]
\newtheorem{lem}[thm]{Lemma}
\newtheorem{proposition}[thm]{Proposition}
\newtheorem{rem}[thm]{Remark}
\def\scong{{\scriptstyle\|}\lower.2ex\hbox{$\wr$}}
\def\Z{{\mathbb Z}}
\def\Q{{\mathbb Q}}
\def\deg{\mathop{\rm deg}\nolimits}
\def\Gal{\mathop{\rm Gal}\nolimits}
\def\deg{\mathop{\rm deg}\nolimits}
\def\Schur{\mathop{\rm index}\nolimits}
\def\rtimes{\mathop{\times\!\!{\raise.2ex\hbox{$\scriptscriptstyle|$}}}
    \nolimits}
\def\proof{\noindent{\it Proof.}\quad}
\outer\def\Demo #1. #2\par{\medbreak\noindent {\it#1.\enspace}
    {\rm#2}\par\ifdim\lastskip<\medskipamount\removelastskip
    \penalty55\medskip\fi}
\def\index{\mathop{\rm index}}
\def\hangbox to #1 #2{\vskip1pt\hangindent #1\noindent \hbox to #1{#2}$\!\!$}
\begin{document}
%\pageno=0
\title[Simultaneous embeddings of finite dimensional division algebras] {Simultaneous embeddings of finite dimensional division algebras}

 \author[Louis Rowen]{Louis Rowen}
\address{Department of Mathematics, Bar-Ilan University, Ramat-Gan 52900,
Israel} \email{rowen@macs.biu.ac.il}
 \author[David J Saltman]{David J Saltman}
\address{CCR-Princeton, 805 Bunn Dr.,Princeton NJ 08540}
\email{saltman@idaccr.org}
\maketitle \

 A celebrated theorem of P.M.~Cohn \cite{C} says that for
any two division rings (not necessarily finite dimensional) over a field $F$,
their amalgamated product over $F$ is a domain which can be
embedded in a division ring.  Note that even with the two initial
division rings begin finite dimensional over their centers,
the resulting division ring is {\bf never} finite dimensional
over its center. Perhaps this led Lance Small to ask the following
question. We say $D/F$ is a division algebra when $D$ is a division
ring finite dimensional over its center $F$. Assume $F_1$ and $F_2$
are fields with the same characteristic. Small asked
whether any two division algebras $D_1/F_1$ and $D_2/F_2$
can be embedded in some third division algebra $E/F$.

We start with a surprisingly straightforward
counterexample in the next section, but then show that a
positive solution exists for division algebras finitely generated
over a common subfield which is either algebraically closed or the
prime subfield (Theorem~\ref{thm7}).

\section{A counterexample}

Suppose, first of all, that $D_1/F_1$ is a f.d.~ division algebra
embedded in the division algebra $E/F$, so $F_1$ and $F$ share the same
prime subfield $P$.
There is a tower of subalgebras $F
\subseteq F_1F \subset D_1F \subset E$, where $F_1F$ must be an
amalgamation of $F_1$ and~$F$ (meaning that it is the field of
fractions of an image of the tensor product $F_1 \otimes_P F$).

Getting more specific, suppose $p_1 \ne p_2$ are primes.
Let $G$ be the infinite cyclic profinite $p_2$-group,
  the inverse limit of all $\mathbb Z/{p_2}^n\mathbb Z$. Take $F_1/\mathbb Q$
Galois with group $G$. (For example, $F_1$ could be contained in
the infinite extension of $\mathbb Q$ obtained by adjoining all
$p_2^n$ roots of 1.) Note that $G$ has no finite subgroups. The
field extension $F_1F/F$  has Galois  group a subgroup of $G$, and
must be finite dimensional (being inside $E/F$), and so must be
trivial. That is, $F_1F = F$, implying $F_1 \subseteq F$.

Next, take $D_1'/\mathbb Q$ of degree $p_1$  and let  $D_1 = D_1'
\otimes_{\mathbb Q} F_1$, a division algebra since $F_1/\mathbb Q$
is a pro-{$p_2$} extension, and let $D_2/\Q$ be any division algebra
split by~$F_1$. For example, there is a cyclic degree
$p_2$ extension $L/\Q$ such that $L \subset F_1$. By class field theory
there is a degree $p_2$ division algebra with maximal subfield $L$.

\begin{proposition} There is no division algebra $E/F$ containing
both $D_1$ and $D_2$.
\end{proposition}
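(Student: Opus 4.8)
The plan is to argue by contradiction. Suppose some division algebra $E/F$ contains both $D_1$ and $D_2$; I will show this forces $E$ to contain a full matrix algebra $M_{p_2}(F)$, which a division ring cannot do.

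First I would use $D_1$ to enlarge the center. The center of $D_1 = D_1' \otimes_{\mathbb Q} F_1$ is $F_1$, so the opening paragraph's tower argument applies to $D_1 \subseteq E$ verbatim: the compositum $F_1 F$ is a field (a finite-dimensional commutative subalgebra of $E$ containing the central subfield $F$), it is Galois over $F$ with group a closed subgroup of $G \cong \mathbb Z_{p_2}$, and since it sits inside the finite-dimensional $E/F$ this group is finite. As $G$ is torsion-free it has no nontrivial finite closed subgroup, so $F_1 F = F$, i.e.\ $F_1 \subseteq F$.

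Next I would feed this inclusion into $D_2$. By construction $F_1$ splits $D_2$, so the class $[D_2] \in \Br(\mathbb Q)$ already vanishes in $\Br(F_1)$; because $F_1 \subseteq F$, its image in $\Br(F)$ vanishes as well, that is, $D_2 \otimes_{\mathbb Q} F \cong M_{p_2}(F)$. Inside $E$ the center $F$ centralizes the subalgebra $D_2$, so multiplication defines an $F$-algebra homomorphism $D_2 \otimes_{\mathbb Q} F \to E$ carrying $1$ to $1$. Its source is a matrix algebra over a field and hence simple, so the homomorphism is injective, embedding $M_{p_2}(F)$ into $E$. Since $p_2 \ge 2$, $M_{p_2}(F)$ has nontrivial idempotents and zero divisors, contradicting that $E$ is a division ring.

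The whole argument is short once the pieces are lined up, and I expect the genuinely load-bearing step to be the first one: it is precisely the profinite, torsion-free nature of $G$ that compels the center $F$ to absorb $F_1$, and it is exactly this forced absorption that splits $D_2$. The remaining steps are standard Brauer-group bookkeeping together with the facts that a simple algebra maps in faithfully or not at all and that matrix algebras of size $\ge 2$ cannot embed in a division ring; the one point I would check carefully is that the opening tower argument really does apply to $D_1$, i.e.\ that $F_1$ is genuinely the center of $D_1$ (using $p_1 \ne p_2$ to keep $D_1$ a division algebra) and that $F_1 F/F$ is Galois with group a closed subgroup of $G$.
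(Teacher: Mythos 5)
Your proof is correct and takes essentially the same route as the paper: the torsion-free profinite Galois group argument forces $F_1 \subseteq F$, and then the fact that $F_1$ splits $D_2$ produces a split simple algebra mapping unitally into $E$, which is impossible in a division ring. The only cosmetic difference is that you tensor $D_2$ up to $F$ and invoke simplicity of $M_{p_2}(F)$ to get an embedding, whereas the paper views the subring $D_2F_1 \subseteq E$ as a homomorphic image of the split algebra $D_2 \otimes_{\Q} F_1$; the contradiction is the same.
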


\begin{proof}
If $E/F$ contained both $D_1$ and $D_2$, then $F_1 \subseteq F$ is
central, by the above paragraph, so $E$ contains $D_2 F_1$
which is a division algebra but also a homomorphic image of the
split algebra $D_2 \otimes _{\Q} F_1$ and thus is commutative, a
contradiction.
\end{proof}

The rationale for this example is that the centers are
incompatible in some sense.

\section{Positive results}

\begin{rem} Since every division algebra is a tensor product of division algebras of prime power degree, it is natural to ask that if $D_1/F_1$ and $D_2/F_2$ are division algebras over
respective degrees
$p^{t_1}$ and $p^{t_2}$, then can $D_1/F_1$ and $D_2/F_2$ be embedded into
a single division algebra $E/F$ of degree $p^t$,
and is there a bound for $t$ in terms of $t_1$ and $t_2$? What
would be the best bound?
\end{rem} 

We approach the problem via \cite{Sa}. First let us fix some
notation. We write $\index(D)$ for the (Schur) index of $D$.
Fixing $r>1,$ let $UD(F,n)/Z(F,n)$ denote the generic division algebra of
degree $n$ over $F$ in $r$ indeterminates. We write $Z$ for $ Z(F,n)$.
When $L/F$ is a cyclic Galois extension of dimension $n$ and $a\in
F,$ $\Delta(L/F,a)/F$ denotes the $F$-central cyclic algebra having
maximal subfield $L$, together with some element~$z$ inducing the
automorphism generating $\Gal(L/F)$, satisfying $z^n = a.$
We begin with some lemmas.

\begin{lem}\label{bas0} There is a field $K(t) \supset Z$ and a degree $n$
cyclic extension $L/K(t)$ such that $L/F$ is rational, and
$UD(F,n) \otimes_Z K(t) = \Delta(L/K(t),t)$.\end{lem}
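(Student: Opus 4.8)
The plan is to split $UD(F,n)$ by a cyclic, $F$-rational field extracted from the characteristic polynomial of a generic matrix, and then to renormalize the resulting cyclic presentation so that the distinguished element satisfies $z^n=t$ for a transcendental $t$.

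First I would produce an $F$-rational cyclic splitting field. Let $X$ be one of the $r$ generic matrices whose central fractions generate $UD(F,n)$, let $f\in Z[\la]$ be its characteristic polynomial, and let $\tilde M$ be a splitting field of $f$ over $Z$, with eigenvalues $\xi_1,\dots,\xi_n$. Generically $f$ is separable and irreducible with $\Gal(\tilde M/Z)\cong S_n$ permuting the $\xi_i$, and $Z(\xi_1)\cong Z[X]$ is a maximal subfield of $UD(F,n)$; hence $\tilde M\supseteq Z(\xi_1)$ splits $UD(F,n)$. The key point is that $\tilde M$ is rational over $F$: ordering the eigenvalues of $X$ amounts to choosing an ordered eigenbasis, which conjugates $X$ to $\mathrm{diag}(\xi_1,\dots,\xi_n)$; in this gauge the remaining generic matrices become arbitrary and the residual symmetry is the diagonal torus modulo scalars, so $\tilde M$ is the function field of a linear torus action on affine space and is therefore $F$-rational.

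Next I would descend to a cyclic base. Fix an $n$-cycle $c\in S_n=\Gal(\tilde M/Z)$ and set $Z_1=\tilde M^{\langle c\rangle}$, so that $\tilde M/Z_1$ is cyclic of degree $n$ with $\tilde M=Z_1(\xi_1)$ and $Z\subseteq Z_1$. Since $\tilde M$ splits $UD(F,n)$ and $[\tilde M:Z_1]=n=\deg UD(F,n)$, the field $\tilde M$ embeds as a cyclic maximal subfield of $UD(F,n)\otimes_Z Z_1$, giving $UD(F,n)\otimes_Z Z_1=\Delta(\tilde M/Z_1,a)$ for some $a\in Z_1^\times$. Finally, to make the value a transcendental, adjoin $t$ and let $K(t)$ be obtained from $Z_1(t)$ by adjoining a generic solution of $N_{\tilde M(t)/Z_1(t)}(y)=a/t$; over $K(t)$ the class of $\Delta(\tilde M/Z_1,a/t)$ vanishes, so $\Delta(\tilde M/Z_1,a)\otimes K(t)=\Delta(L/K(t),t)$ with $L=\tilde M\cdot K(t)$. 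Because $\tilde M/Z_1$ is cyclic, the norm-one torus $R^1_{\tilde M/Z_1}{\mathbb G}_m$ is rational, so this last extension is rational; combined with the $F$-rationality of $\tilde M$ this keeps $L/F$ rational, and $K(t)\supseteq Z$ as required.

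The main obstacle is controlling everything rationally while keeping the index equal to $n$. Concretely: (i) one must check that $\index(UD(F,n)\otimes_Z Z_1)$ does not drop below $n$, so that the cyclic algebra $\Delta(\tilde M/Z_1,a)$ is the full algebra and the final $\Delta(L/K(t),t)$---which is automatically a division algebra of degree $n$ since $t$ is a transcendental value---indeed equals $UD(F,n)\otimes_Z K(t)$; and (ii) one must verify the two rationality inputs, namely that partial diagonalization exhibits $\tilde M$ as an $F$-rational torus quotient, and that the norm-one torus of the cyclic extension $\tilde M/Z_1$ is rational, so that the value-normalization in the last step does not destroy $F$-rationality of $L$. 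These rationality facts, especially for the norm torsor, are where the real work lies.
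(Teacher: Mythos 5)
Your steps (1)--(3) are sound, and in fact re-derive the input the paper cites: your torus argument for the $F$-rationality of $\tilde M$ is essentially Procesi's description of the center, which is what \cite{Sa} p.~322 records as $Z = F(X\oplus Y)^{S_n}$ (one checks $F(X\oplus Y)$ is exactly your $\tilde M$, since $S_n$ acts faithfully by permuting the eigenvalue generators of $X$); and Skolem--Noether does give $UD(F,n)\otimes_Z Z_1 = \Delta(\tilde M/Z_1,a)$ for $Z_1=\tilde M^{\langle c\rangle}$, division or not. The gap is in your last step, and it is fatal to what the lemma actually asserts. In the statement, $K(t)$ is not merely a field containing an element named $t$: it is a rational function field in $t$ over a subfield $K$, with the cyclic extension descending to $K$ and $t$ transcendental over it. That product structure is the whole point: it is what makes $\Delta(L/K(t),t)$ a division algebra (twisted Laurent series in $t$), and it is exactly what the proof of Lemma~\ref{bas2} consumes in its ``twisted polynomial rings'' step. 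Your field, the function field of the hypersurface $N_{\tilde M(t)/Z_1(t)}(y)=a/t$, does not have this structure: eliminating $t=a/N(y)$ shows it is just $Z_1(y_1,\dots,y_n)$, with $t$ a rational function of the coordinates, algebraically entangled with them. No subfield $K$ of the required kind is produced, and none can be extracted by monomial means: the conjugates $\sigma^j(y)$ generate a free $C_n$-lattice $\Z[C_n]$, and the rank-one fixed sublattice generated by the norm monomial $\prod_j\sigma^j(y)$ is \emph{not} a $C_n$-direct summand of $\Z[C_n]$. Read so loosely that $K(t)$ means any field with a marked element $t$, your construction does give the displayed isomorphism (Brauer equivalence plus equal degree), but that weak statement cannot support Lemma~\ref{bas2}.

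The consequence is that you have no control of the index: since your field is purely transcendental over $Z_1$, the index of $UD(F,n)$ over it equals $\index(UD(F,n)\otimes_Z Z_1)$, which is precisely your unresolved point (i). That point is not a side condition to be checked; under the intended reading it \emph{is} the content of the lemma, and nothing in your argument addresses it. The paper's proof supplies exactly this missing input from \cite{Sa}: restricted to $C_n$, the lattice $Y$ decomposes as $M\oplus\Z$ with $M$ free, so $Z_1 = F(X\oplus Y)^{C_n}$ is \emph{literally} of the form $K(t)$, with $K = F(X\oplus M)^{C_n}$, $L = F(X\oplus M)$ rational over $F$, and $t$ the generator of the trivial summand --- an honest independent variable fixed by $C_n$ --- and the class of $UD(F,n)$ restricts to $\Delta(L(t)/K(t),t)$, visibly a division algebra. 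In other words, the lattice splitting shows that your parameter $a$ is already, up to a norm, an independent variable over the ground data, so no further base change (certainly not a norm-equation one, which destroys the structure) is needed. A minor further point: your appeal to rationality of the norm-one torus of a cyclic extension is both unnecessary ($L/F$ rational already follows from $L=\tilde M(y_1,\dots,y_n)$) and unreliable as stated, since (stable) rationality of norm-one tori of cyclic extensions is a delicate matter (Endo--Miyata), not a general fact.
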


\begin{proof} Write $Z = F(X \oplus Y)^{S_n}$ as usual (see [Sa] p. 322). 
Let $C_n \subset S_n$ be generated by the $n$ cycle $(1,2, \dots, n)$. Over
$C_n$, $Y \cong M \oplus \Z$ where $M$ is a free $C_n$ lattice. We
can set $L = F(X \oplus M)$, $K = L^{C_n}$, and take $t$ to be the
generator of $\Z$.~\end{proof}

\begin{lem}\label{bas2} Suppose $F$ is a field and $D/F$ is an
division algebra.
%%I put in central; otherwise, what if $D$ is the algebraic closure of $F$?
%%In $D/F$ $F$ is always the center and this is always finite dimensional - D
Set $$A = D \otimes_F UD(F,n)^b = (D \otimes_F Z) \otimes_Z
UD(F,n)^b.$$ Then  $\index (A)$ is the degree of $D$ times
$n/(n,b) = \index (UD(F,n)^b)$.
\end{lem}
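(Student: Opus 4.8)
The statement packages two claims: the case $D=F$ gives $\index(UD(F,n)^b)=n/(n,b)$, and tensoring with the division algebra $D$ multiplies this by $\deg D$. The plan is to prove the asserted equality by two opposite inequalities, obtaining the upper bound from submultiplicativity and extracting the lower bound from a valuation computation over the complete field furnished by Lemma~\ref{bas0}. For the upper bound, write $A=(D\otimes_F Z)\otimes_Z UD(F,n)^b$ and use submultiplicativity of the index in the Brauer group, $\index(A)\le\index(D\otimes_F Z)\cdot\index(UD(F,n)^b)$. Here $\index(D\otimes_F Z)\le\deg D$ since $D\otimes_F Z$ has degree $\deg D$, while $\index(UD(F,n)^b)\le n/(n,b)$ is the elementary half of the cyclic computation: by Lemma~\ref{bas0}, $UD(F,n)\otimes_Z K(t)=\Delta(L/K(t),t)$, so its $b$-th Brauer power is $\Delta(L/K(t),t^b)$, a class split by the cyclic extension $L$ and of period dividing $n/(n,b)$, hence of index dividing $n/(n,b)$. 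This already yields $\index(A)\le\deg D\cdot n/(n,b)$.

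For the lower bound it suffices to exhibit one scalar extension of $Z$ over which the index does not fall below $\deg D\cdot n/(n,b)$. I would pass to $K(t)$ and complete at the $t$-adic valuation (with $t$ the transcendental of Lemma~\ref{bas0}), whose residue field is $K$; write the completion as $K((t))$. Then $A\otimes_Z K((t))$ is Brauer equivalent to $[D\otimes_F K((t))]+b\,[\Delta(L/K((t)),t)]$, where the first summand is unramified with residue algebra $D\otimes_F K$, and the cyclic summand is tamely ramified: $L$ becomes an unramified cyclic extension with residue field $\bar L$, a degree $n$ cyclic extension of $K$, and since $\bar L/F$ is rational by Lemma~\ref{bas0}, the algebra $D\otimes_F\bar L$, and hence $D\otimes_F L_0$ for every subfield $L_0\subseteq\bar L$, is division of degree $\deg D$. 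Invoking the structure of tamely ramified division algebras over the complete field $K((t))$, the ramified factor $\Delta(L/K((t)),t^b)$ has value-group index $m:=n/(n,b)$ and residue the field $L_0=\bar L^{\langle\sigma^m\rangle}$ of degree $m$ over $K$; consequently the division algebra $\Delta_B$ representing $A\otimes_Z K((t))$ has value-group index $e=m$ and residue division algebra $\bar\Delta_B$ Brauer equivalent to $(D\otimes_F K)\otimes_K L_0=D\otimes_F L_0$, which is division of degree $\deg D$. The defectless identity $\index(\Delta_B)^2=e\cdot\dim_K\bar\Delta_B$ then reads $\index(\Delta_B)^2=m\cdot[L_0:K]\,(\deg D)^2=m^2(\deg D)^2$, so $\index(A\otimes_Z K((t)))=m\cdot\deg D$. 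Since the index can only decrease under $Z\to K((t))$, we get $\index(A)\ge\deg D\cdot n/(n,b)$, which with the upper bound proves the lemma; the case $D=F$ records $\index(UD(F,n)^b)=n/(n,b)$.

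The main obstacle is the residue computation in the second paragraph, since the residue division algebra of a tensor product is not in general the tensor product of the residues. What rescues the argument is that the ramified factor has residue the \emph{field} $L_0$, which forces the residue of the product to be Brauer equivalent to $D\otimes_F L_0$, combined with the rationality of $\bar L/F$ from Lemma~\ref{bas0}, guaranteeing that $D\otimes_F L_0$ keeps full degree $\deg D$. The secondary point is to check that the value-group index of the product equals exactly $m$ and not more; this follows by combining the evident containment of value groups (giving $e\ge m$) with the upper bound $\index(A\otimes_Z K((t)))\le\deg D\cdot m$ already in hand.
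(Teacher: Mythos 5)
The genuine gap is in your upper bound. You need $\index(UD(F,n)^b)\le n/(n,b)$ \emph{over the center $Z$}, but what your argument actually bounds is $\index\bigl(UD(F,n)^b\otimes_Z K(t)\bigr)$. The index of a Brauer class can only drop under scalar extension, and $K(t)/Z$ is a proper finite extension (it is $F(X\oplus Y)^{C_n}/F(X\oplus Y)^{S_n}$, of degree $(n-1)!$), so a bound after extension to $K(t)$ gives no bound over $Z$; a priori the index over $Z$ could be larger by a factor dividing $(n-1)!$. This missing inequality is exactly what the paper's first sentence supplies by quoting the known fact that powers of the generic division algebra have index equal to exponent; equivalently one can invoke the classical fact, valid for \emph{every} central simple algebra $B$ of degree $n$ over an arbitrary field, that $\index(B^{\otimes b})$ divides $n/(n,b)$ (reduce to $n=p^s$, $b=p^v$ by primary decomposition; then $[B]^p$ is the class of the exterior-power algebra $\lambda^p B$, whose degree has $p$-part $p^{s-1}$; iterate). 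Some input of this kind is unavoidable and cannot be extracted from Lemma~\ref{bas0} alone. A second, smaller, flaw sits inside the same step: the inference ``split by the cyclic extension $L$ and of period dividing $n/(n,b)$, hence of index dividing $n/(n,b)$'' is not a valid principle, because there exist cyclic division algebras whose period is strictly smaller than their index (for instance, a suitable degree $4$, period $2$ algebra over $\Q((t))$ containing a cyclic quartic maximal subfield). What is true, and what the paper uses, is that $\Delta(L/K(t),t)^b$ is Brauer equivalent to $\Delta(L'/K(t),t)$ for the subextension $L'$ of degree $n/(n,b)$, so the index is bounded by the degree of that cyclic algebra.

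Your lower bound, by contrast, is essentially correct, and it is a genuinely different route from the paper's: the paper never completes, but observes directly that $(D\otimes_F K(t))\otimes_{K(t)}\Delta(L'/K(t),t)$ is a division algebra ``via twisted polynomial rings,'' whereas you pass to $K((t))$ and run the Henselian structure theory (defectlessness, residue and value group of the underlying division algebra). Two cautions if you keep that route. First, the adjective ``tame'' is wrong whenever the characteristic of $F$ divides $m=n/(n,b)$ (then $K((t))(t^{1/m})$ is wildly ramified), so you must cite the inertial-times-nicely-semiramified tensor product theorem (Jacob--Wadsworth, Morandi) in a form covering that case; second, the residue computation you yourself flag as the main obstacle \emph{is} that theorem, not something lighter, so it must be quoted, not waved at. The cleanest repair is also the paper's own argument transplanted to your setting: the algebra you call $\Delta_B$ is concretely the twisted Laurent series ring $(D\otimes_F L_0)((z;\sigma))$ with $z^m=t$, which is visibly a division algebra of degree $m\deg D$ over $K((t))$ in every characteristic, with no residue theory needed. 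With the upper bound repaired as above, your proof closes.
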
\begin{proof}  Since $UD(F,n)$ has   index equal exponent, the  index
of any power is equal to the exponent. In fact, if $b = b'(n,b)$
and $n = n'(n,b)$ then $(b',n') = 1$. Thus $UD(F,n)^b =
(UD(F,n)^{(n,b)})^{b'}$ and the index and exponent of $UD(F,n)^b$
is the same as that of $(UD(F,n)^{(n,b)})$. That is, we may assume
$b|n$.

By Lemma \ref{bas0}, there is a field $K(t) \supset Z$ and a
degree $n$ cyclic extension $L/K(t)$ such that $D \otimes_F L$ is
a division algebra and $UD(F,n) \otimes_Z K(t) =
\Delta(L/K(t),t)$. Of course, by Galois theory,
$\Delta(L/K(t),t)^b$ is equal in the Brauer group to
$\Delta(L'/K(t),t)$ where $L/L'$ has degree~$b$. Finally, $(D
\otimes_F K(t)) \otimes_{K(t)} \Delta(L'/K(t),t)$ is a division
algebra via twisted polynomial rings.\end{proof}

We are in the game of embedding division algebras into bigger division
algebras. The key method is the following.

\begin{thm}\label{thm4.5} Suppose $D/K$ is a division algebra of degree $a$ and $K/F$
has degree~$b$. Assume $E/F$ is a division algebra of degree $N =
nab$. Then $D$ is isomorphic to a subalgebra of $E$ over $F$ if
and only if $(E \otimes_F K) \otimes_K D^{\circ}$ has (Schur)
index dividing~$n$. Furthermore, if this index divides $n$ then it
is equal to $n$. \end{thm}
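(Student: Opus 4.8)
The plan is to convert the embedding question into one about modules over the central simple $K$-algebra $R := D \otimes_F E^{\circ}$. First I would record the algebra identity $R^{\circ} \cong E \otimes_F D^{\circ} \cong (E \otimes_F K) \otimes_K D^{\circ}$ (the last step because $K \otimes_K D^{\circ} = D^{\circ}$), so that $B := (E \otimes_F K)\otimes_K D^{\circ}$ is exactly $R^{\circ}$ and in particular $\index(B) = \index(R)$. The governing principle is the standard correspondence: an $F$-algebra homomorphism $\phi\colon D \to E$ is the same datum as a left $R$-module structure on the vector space $E$ whose restriction to $1 \otimes E^{\circ}$ is regular right multiplication, the homomorphism being recovered from the action of $D \otimes 1$ inside $\End_{E^{\circ}}(E) \cong E$. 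Thus everything reduces to understanding when the regular $E^{\circ}$-module $E$ lifts along $E^{\circ} \hookrightarrow R$ to an $R$-module.

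Next I would dispose of the ``furthermore'' clause, which is really a statement about $B$ alone and needs no embedding. Two elementary index inequalities do it: restricting $E$ to $K$ gives $\index(E \otimes_F K) \ge \deg(E)/[K:F] = na$, while $[E \otimes_F K] = [B] + [D]$ in $\Br(K)$ gives $\index(E \otimes_F K) \le a\,\index(B)$. Combining these, $\index(B) \ge n$ unconditionally, so the hypothesis that $\index(B)$ divides $n$ forces $\index(B) = n$.

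For the forward direction I would argue by centralizers. If $D$ sits inside $E$ over $F$, then its center $K$ is a subfield of $E$ and $C_E(K)$ is central simple over $K$ of degree $na$; inside it $D$ is a $K$-central simple subalgebra, so $C_E(K) \cong D \otimes_K C_E(D)$. Recalling that $C_E(K)$ is Brauer-equivalent to $E \otimes_F K$ over $K$, we get $[C_E(D)] = [C_E(K)] - [D] = [E \otimes_F K] - [D] = [B]$ in $\Br(K)$. Since $C_E(D)$ is a subring of the division ring $E$ it is a division algebra, and the dimension count $\dim_K C_E(D) = (na)^2/a^2 = n^2$ shows it has degree $n$; therefore $\index(B) = \index(C_E(D)) = n$, which divides $n$.

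The backward direction is where the real work lies. Assuming $\index(B) \mid n$, the previous paragraph upgrades this to $\index(B) = \index(R) = n$. I would then compute the dimension of the unique simple $R$-module $S$: as $\dim_K R = a^2 N^2$ we have $\deg(R) = aN$ over $K$, whence $\dim_F S = [K:F]\,\deg(R)\,\index(R) = b \cdot aN \cdot n = N^2$, using $abn = N$. On the other hand the unique simple $E^{\circ}$-module is $E$ itself, also of $F$-dimension $N^2$. Since the restriction of any $R$-module to $E^{\circ}$ is a direct sum of copies of $E$, the equality $\dim_F S = \dim_F E$ forces the restriction of $S$ to $E^{\circ}$ to be a single copy of $E$, i.e. the regular $E^{\circ}$-module lifts to $S$. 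Transporting the $R$-structure across this isomorphism equips $E$ with an $R$-action extending right multiplication, and the commuting action of $D \otimes 1$ produces the desired map $D \to \End_{E^{\circ}}(E) \cong E$, injective because $D$ is simple. The main obstacle is precisely this identification of $S$ restricted to $E^{\circ}$ with $E$: it is what makes the abstract module structure descend to an honest subalgebra, and it succeeds only because the lower bound $\index(B) \ge n$ pins the multiplicity to exactly one.
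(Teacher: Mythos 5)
Your proof is correct, and it splits into a part that mirrors the paper and a part that genuinely diverges. Your forward direction and your handling of the ``furthermore'' clause are essentially the paper's: the paper also gets the easy direction from the double centralizer theorem (taking $E' = C_E(K)$ and then the centralizer of $D$ in $E'$, a division algebra of degree $n$), and its converse opens with exactly your two inequalities --- $\index(E \otimes_F K) \ge N/b = na$ from restriction of scalars, and $\index(E \otimes_F K)$ dividing $a\,\index(B)$ from the Brauer relation $[E \otimes_F K] = [B][D]$ --- which together give the unconditional bound $\index(B) \ge n$. Where you take a different route is the converse itself, which is the real content. The paper stays inside $E$ and the Brauer group: having forced $\index(E \otimes_F K) = na$, it picks a degree-$n$ algebra $A$ in the class of $(E \otimes_F K) \otimes_K D^{\circ}$, observes that $D \otimes_K A$ and the centralizer $C_E(K)$ are Brauer-equivalent central simple $K$-algebras of the same degree $na$, hence isomorphic, and concludes $D \subset C_E(K) \subset E$. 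You instead run the classical bimodule criterion: with $R = D \otimes_F E^{\circ} \cong B^{\circ}$ of index $n$, the unique simple $R$-module $S$ has $\dim_F S = [K:F]\deg(R)\index(R) = N^2$, so its restriction to $E^{\circ}$ is a single copy of the regular module $E$, and transporting the $D \otimes 1$ action through $\End_{E^{\circ}}(E) \cong E$ produces the embedding. The two arguments rest on the same numerics, but yours inlines the Wedderburn-theoretic content hidden in the paper's ``Brauer equivalent plus equal degree implies isomorphic'' step, so it is more self-contained and makes the mechanism explicit --- the embedding exists precisely because the index lower bound pins the simple $R$-module to multiplicity one over $E^{\circ}$ --- whereas the paper's version is shorter if one takes the standard structure theory of central simple algebras as given.
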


\begin{proof} Suppose $D \subset E$. In particular, $K \subset E$ and
so $E \otimes_F K$ has   index $N/b$ and we set $E'/K$ to be the
associated division algebra which is the centralizer of $K$ in
$E$. Then $D \subset E'$ and we take $D'$ to be its centralizer,
implying $E' = D \otimes_K D'$. Since $D'$ has degree $n$, we have
proven one direction.

Conversely, suppose     $\index ((E \otimes_F K) \otimes _K
D^{\mathop{op}})$ divides~$n$. Then   $\index (E \otimes_F K)$
divides~$na$, implying $\index (E \otimes_F K)= na$ since $\index
(E \otimes_F K) \ge \frac N b = na.$ Thus, $(E \otimes_F K)
\otimes _K D^{\mathop{op}} \sim A,$ where $A/K$ has degree $n,$
implying
 $[E \otimes_F K]$ is equal in the
Brauer group to $[D][A]$.

 Let $E'$ be the centralizer of $K$ in $E$. Since the degrees
agree, $E' \cong D \otimes_K A$.\end{proof}

\medskip
We are going to force one algebra inside another by using partial
splitting fields and Weil transfers. More specifically, let $A/K$
be a central simple algebra and $n$ an integer dividing the degree
of $A/K$. Let $V_n(A)$ be the variety of rank $n$ left ideals of $A$
and let $K_n(A)$ be its field of fractions. Then for any field $K'
\supset K$, $V_n(A)$ has a $K'$ point if and only if $A \otimes_K
K'$ has index dividing $n$.

Next we set $W_n(A)$ to be the Weil transfer to $F$ of $V_n(A)$,
so for $F' \supset F$, $W_n(A)$ has an $F'$ point if and only if
$V_n(A)$ has an $K \otimes_F F'$ point (and in fact there is a
natural correspondence). Let $F_n(A)$ denote the field of
fractions of $W_n(A)$. Then   $\index(A \otimes _F KF_n(A))$
 divides $n$.

The important tool for using this construction is the following
result (\cite{Sa}) about index reduction, for which we need to
introduce more notation. Let $K/F$ be finite separable with Galois
closure $\bar K/F$. Let $G$ be the Galois group of $\bar K/F$ and
$H \subset G$ the subgroup corresponding to $\bar K/K$. If $r$ is
the degree of $A/K$, then we can define an ``action'' of the $G$
module $R = (\Z/r\Z)[G/H]$ as follows. Let $\bar A = A \otimes_K
\bar K$, so $H$ has a natural semilinear action on $\bar A$ and
for any $g \in G$ we can define the $g$ twist $g(\bar A)$. Of
course, for $g \in G$, $gHg^{-1}$ has a natural semilinear action
on $g(\bar A)$.

For $\alpha \in R$, define $H_{\alpha} = \{g \in G | g\alpha =
\alpha\}$. Define $K(\alpha) = \bar K^{H_{\alpha}}$. Write
$$\alpha = \sum n_{gH}\, gH;$$ then the $n_{gH}$ are constant on
$H_{\alpha}$-orbits. Fix a coset $gH$ and set $e = n_{gH}$. Let 
$L \subset H_{\alpha}$ be the stabilizer
of $gH$. Let 
${\mathcal O} = \{g_iH\}$ be the orbit of $H_{\alpha}$ containing $gH$ 
so $e = n_{g_iH}$ for all $i$. 
Then $L$ acts naturally on $g(\bar A)$ and $H_{\alpha}$
acts on $B_{gH}$ which is the tensor product over 
$\bar K$ of $g_i(\bar A)^e$, one for each $g_iH$ in ${\mathcal O}$. 

Now we let $gH$ vary, one for each $H_{\alpha}$ orbit. 
Tensor over $\bar K$ all the $B_{gH}$ defined above  
and call the resulting $\bar K$ algebra $B$. Note that $\bar K$ 
is the center of $B$. 
Define $A^{\alpha}$ to be the  $H_{\alpha}$ invariant
subring of $B$. Then $A^{\alpha}$ has center $K(\alpha)$. 

Finally, for $\alpha = \sum n_{gH}gH$ as above, define
\begin{equation}\label{eq0}|\alpha| = \prod_{gH}\frac{n}{(n,n_{gH})}  .\end{equation}

\begin{thm}\label{thm4} (\cite{Sa} p. 332). Notation as above, suppose $B/F$
is any central simple algebra (over $F$). Then the
 index of $B \otimes_F F_n(A)$ is the gcd of all the integers
$$\Schur (B \otimes_F  A^{\alpha})[K(\alpha):F]|\alpha|,$$
taken over all $\alpha \in R$.\end{thm}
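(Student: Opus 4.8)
The plan is to pin $\index(B\otimes_F F_n(A))$ between two bounds that meet at the asserted gcd, reducing the Weil-transfer problem to the known index reduction for a single generalized Severi-Brauer variety together with Galois descent. The geometric input is the decomposition over the Galois closure: since $K/F$ is separable, $\bar K\otimes_F K\cong\prod_{gH\in G/H}\bar K$, so that after base change $V_n(A)$ becomes the product $\prod_{gH}V_n(g(\bar A))$ of twisted generalized Severi-Brauer varieties, one per coset. By the defining property of the transfer, an $F'$-point of $W_n(A)$ is a $K\otimes_F F'$-point of $V_n(A)$, hence over $\bar K$ a $G$-compatible choice of a rank-$n$ left ideal in each factor $g(\bar A)$. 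The module $R=(\Z/r\Z)[G/H]$ is exactly the receptacle for the reduced-rank data of such a choice: the coefficient $n_{gH}$ of $\alpha=\sum n_{gH}\,gH$ records the class of the chosen rank in the $gH$-slot, and Galois compatibility forces this data to be constant on $H_\alpha$-orbits. This is why $\alpha$ is the right bookkeeping device, why the configuration of type $\alpha$ is defined precisely over $K(\alpha)=\bar K^{H_\alpha}$, and why $A^\alpha$, built as the $H_\alpha$-invariants of the tensor product of the twisted factors $g_i(\bar A)^e$, is the residual $K(\alpha)$-algebra governing the still-unsplit part.

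For the upper bound I would, for each $\alpha$, manufacture a splitting field of $B\otimes_F F_n(A)$ whose degree divides the $\alpha$-term. Passing to $K(\alpha)$ costs the factor $[K(\alpha):F]$ and makes the $\alpha$-configuration rational; splitting the residual algebra $B\otimes_F A^\alpha$ over $K(\alpha)$ costs $\index(B\otimes_F A^\alpha)$; and because in each slot one only splits off a rank-$n$ ideal rather than splitting $g(\bar A)$ outright, each coset leaves a residual generalized Severi-Brauer contribution $n/(n,n_{gH})$, whose product is $|\alpha|$ as defined in (\ref{eq0}). Since the index divides the degree of any splitting field, tracking these three contributions through the transfer shows that $\index(B\otimes_F F_n(A))$ divides $\index(B\otimes_F A^\alpha)\,[K(\alpha):F]\,|\alpha|$ for every $\alpha$, and hence divides their gcd.

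The lower bound is the genuine obstacle, and it is where the specific structure of $F_n(A)$ must be used rather than merely the existence of points. The route I would take is a valuation analysis at the generic point of $W_n(A)$: choose a suitable divisorial valuation of $F_n(A)$, decompose its residue behaviour componentwise over $\bar K$ through the product structure above, apply the known index-reduction formula for a single $V_n$ in each slot, and then reassemble by Galois descent. The delicate point is that the transfer couples the slots through the $G$-action, so the componentwise index drops cannot be chosen independently; proving that the admissible joint drops are exactly those indexed by $\alpha\in R$, weighted by $|\alpha|$ and by the field-of-definition degree $[K(\alpha):F]$, is the combinatorial heart of the argument and is what forces the gcd to be attained and not merely divided. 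I expect this coupling, namely the interaction of the corestriction coming from the transfer with the rank jumps of the generalized Severi-Brauer factors, to be the main difficulty.
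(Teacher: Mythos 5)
First, a point of comparison you could not have known: the paper does not prove this statement at all. Theorem~\ref{thm4} is imported verbatim from \cite{Sa} (p.~332), so there is no in-paper argument to measure your sketch against; what you have attempted is a reproof of Saltman's index reduction theorem for Weil transfers of generalized Severi--Brauer varieties. Judged on its own terms, your sketch has one sound half and one missing half. The ``upper bound'' direction (that $\index(B\otimes_F F_n(A))$ divides each term) is right in outline: pass to $M=K(\alpha)F_n(A)$, write $\index(B\otimes_F M)$ as dividing $\Schur(B\otimes_F A^{\alpha})\cdot\index(A^{\alpha}\otimes_{K(\alpha)}M)$, and bound the last factor by $|\alpha|$. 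Even here one detail matters: the slot-by-slot estimate $n/(n,n_{gH})$ lives over $\bar K M$, and to get it down to $M$ without an extra factor of $[\bar K:K(\alpha)]$ you must use that the rank-$n$ ideals in the factors $g(\bar A)$ are all Galois conjugates of the single generic ideal defined over $KF_n(A)$, hence the relevant ideal of the tensor product is $H_{\alpha}$-stable and descends to $A^{\alpha}\otimes_{K(\alpha)}M$. (Also, a small misstatement: it is $W_n(A)\times_F\bar K$, not $V_n(A)$, that decomposes as $\prod_{gH}V_n(g(\bar A))$.)

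The genuine gap is the other direction: that the gcd divides $\index(B\otimes_F F_n(A))$, i.e.\ that the index cannot drop below every term. You correctly identify this as ``the combinatorial heart,'' but the valuation analysis you propose is not a mechanism that can deliver it: valuation-theoretic and specialization arguments show that indices of unramified specializations \emph{divide} the generic index, which constrains the index from the wrong side and cannot produce the required lower bound, coupled slots or not. Every known proof of an index reduction formula of this type is K-theoretic: one computes the image of the reduced-rank map on $G_0$ of the twisted variety (coherent modules over a sheaf of algebras representing $B$ on $W_n(A)$), either via the Quillen/Panin-style decomposition of the K-theory of (products of) generalized Severi--Brauer varieties together with Galois descent, or, as Saltman actually does in \cite{Sa}, via Moody's theorem on $G_0$ of crossed products --- which is the point of that paper's title. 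The quantities $\Schur(B\otimes_F A^{\alpha})[K(\alpha):F]|\alpha|$ arise there as the reduced ranks of an explicit generating family of modules indexed by $\alpha\in R$; the gcd is attained because those modules generate. Without some such input, your second and third paragraphs are a plausible explanation of why the answer is indexed by $R$, not a proof that it is.
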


We actually need a double version of the above result. Let us
assume that $K/F$ and $K'/F$ are finite separable with Galois
closures $\bar K/F$ and $\bar K'/F$ and corresponding groups $G
\supset H$ and $G' \supset H'$. For convenience we may assume that
$\bar K/F$ and $\bar K'/F$ are linearly disjoint. Let $A/K$ and
$A'/K'$ be central simple algebras and let $F_{n,n'}(A,A')$ denote
the join of the fields $F_n(A)$ and $F_{n'}(A')$ over $F$. If
$A'/K'$ has degree $r'$ set $R' = (\Z/r'\Z)[G'/H']$ as above. If
$\alpha \in R$ and $\beta \in R'$ set $K({\alpha,\beta}) =
K(\alpha) \otimes_F K'(\beta)$. Finally write $\beta = \sum_{gH'}
m_{gH'}gH' \in R'$ and set $$|\beta| =
\prod_{gH'}\frac{n'}{(n',m_{gH'})}.$$

\begin{thm}\label{thm5} Suppose $B/F$ is a central simple algebra
and set $$B(\alpha,\beta) = B \otimes_F K(\alpha,\beta).$$ 
Then the (Schur) index
  $\mathbf i : = \index(B \otimes_F F_{n,n'}(A,A'))$ is the 
gcd of the integers
$$\Schur\left((B(\alpha,\beta) \otimes_{K(\alpha,\beta)} 
(A^{\alpha}  \otimes_{K{(\alpha)}} K(\alpha,\beta))  \otimes_{K(\alpha,\beta)}
(A'^{\beta} \otimes_{K'(\beta)} K({\alpha,\beta}))\right)[K({\alpha,\beta}):F]|\alpha||\beta|,$$
ranging over all $\alpha \in R$ and $\beta \in R'$.
\end{thm}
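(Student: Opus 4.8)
The plan is to derive this double index-reduction formula from the single-variable statement of Theorem~\ref{thm4} by peeling off one Weil-transfer field at a time and then merging the two resulting gcd's. The only genuinely new ingredient beyond Theorem~\ref{thm4} is checking that each piece of the single-variable construction---the algebra $A^{\alpha}$, the field $K(\alpha)$, the degree $[K(\alpha):F]$, the integer $|\alpha|$, and the module $R$ itself---survives unchanged (up to an obvious base change) under the two field extensions that occur along the way. The hypothesis that $\bar K/F$ and $\bar K'/F$ are linearly disjoint is exactly what makes this bookkeeping legitimate.

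First I would set $\tilde F = F_{n'}(A')$ and note that, since $W_{n'}(A')$ is geometrically integral over $F$ (being a Weil transfer of the geometrically rational variety $V_{n'}(A')$), the extension $\tilde F/F$ is regular. Hence $K\otimes_F\tilde F$ is again a field, $\bar K\otimes_F\tilde F$ is its Galois closure with unchanged group $G\supset H$, and---because Weil restriction commutes with base change and the function field of the base change of a geometrically integral variety is the compositum---one gets $F_{n,n'}(A,A') = \tilde F\cdot F_n(A) = \tilde F_n\big(A\otimes_K(K\otimes_F\tilde F)\big)$. Applying Theorem~\ref{thm4} over the base field $\tilde F$, with central simple algebra $B\otimes_F\tilde F$ and the algebra $A\otimes_K(K\otimes_F\tilde F)$ of unchanged degree $r$ and module $R$, expresses $\mathbf i$ as the gcd over $\alpha\in R$ of
$$\index\big((B\otimes_F A^{\alpha})\otimes_{K(\alpha)}(K(\alpha)\otimes_F\tilde F)\big)\,[K(\alpha):F]\,|\alpha|,$$
where I use that $A\mapsto A^{\alpha}$ commutes with the flat base change to $\tilde F$, so that $(A\otimes_K(K\otimes_F\tilde F))^{\alpha} = A^{\alpha}\otimes_{K(\alpha)}(K(\alpha)\otimes_F\tilde F)$ with center $K(\alpha)\otimes_F\tilde F = K(\alpha)\cdot F_{n'}(A')$.

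Next I would recognize the inner index as a second Weil-transfer computation, now over the base field $K(\alpha)$. Since $K(\alpha)/F$ is finite separable and $F_{n'}(A')/F$ is regular, $K(\alpha)\otimes_F F_{n'}(A')$ is a field equal to $(K(\alpha))_{n'}\big(A'\otimes_{K'}(K'\otimes_F K(\alpha))\big)$, and linear disjointness makes $K'\otimes_F K(\alpha)$ a field carrying the Galois data $G'\supset H'$ and module $R'$ unchanged. Applying Theorem~\ref{thm4} a second time, over $K(\alpha)$ with central simple algebra $B\otimes_F A^{\alpha}$, rewrites each inner index as the gcd over $\beta\in R'$ of $\index\big(B(\alpha,\beta)\otimes_{K(\alpha,\beta)}(A^{\alpha}\otimes_{K(\alpha)}K(\alpha,\beta))\otimes_{K(\alpha,\beta)}(A'^{\beta}\otimes_{K'(\beta)}K(\alpha,\beta))\big)\,[K'(\beta):F]\,|\beta|$, the new center being $K(\alpha)\otimes_F K'(\beta)=K(\alpha,\beta)$ exactly as defined. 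Substituting, then using that the positive integer $[K(\alpha):F]|\alpha|$ pulls through a gcd and that $[K(\alpha):F][K'(\beta):F]=[K(\alpha,\beta):F]$ (again linear disjointness), collapses the nested gcd's into a single gcd over all pairs $(\alpha,\beta)$, which is the stated formula.

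The main obstacle is the compatibility of the $A^{\alpha}$-construction with base change. Concretely, I must verify that forming the twists $g(\bar A)$, the tensor products $B_{gH}$ over $\bar K$, and the passage to $H_{\alpha}$-invariants all commute with extending the base field along the regular extension $F_{n'}(A')$ and along the linearly disjoint separable extension $K'(\beta)$. This rests on the identifications $\bar K\otimes_F\tilde F=\overline{K\otimes_F\tilde F}$ and $\bar K\otimes_F K'(\beta)=\overline{K\otimes_F K'(\beta)}$ preserving the Galois groups, together with invariants commuting with flat base change. Once this compatibility is established, the remainder is the routine gcd bookkeeping indicated above.
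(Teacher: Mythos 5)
Your proposal is correct and follows essentially the same route as the paper's own proof: two successive applications of Theorem~\ref{thm4}, using regularity/linear disjointness to keep the Galois data, the modules $R, R'$, the $\alpha$-twist construction, and the degrees $[K(\alpha):F]$, $[K'(\beta):F]$ intact under base change, and then collapsing the nested gcd's via $[K(\alpha,\beta):F]=[K(\alpha):F][K'(\beta):F]$. The only difference is that you peel off the transfer for $A$ first (working over $F_{n'}(A')$ and then over $K(\alpha)$) whereas the paper peels off the transfer for $A'$ first (working over $F_n(A)$ and then over $K'(\beta)$), which is immaterial by the symmetry of the setup.
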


\proof The basic idea here is to apply Theorem~\ref{thm4} twice,
noting that $[K({\alpha,\beta}):F] = [K(\alpha):F][K'(\beta):F]$.
Put $B' = B \otimes_F F_n(A)$. Then, by Theorem~\ref{thm4}, 
using the fact that $[F_nK'(\beta):F_n(A)] = [K'(\beta):F]$, 
$\mathbf i$ is the gcd of all integers
$$\Schur\left( (B' \otimes_{F_n(A)} F_nK'(\beta))  \otimes_{F_nK'(\beta)} ({A'}^\beta  \otimes_{K'(\beta)}
F_nK'(\beta))\right)[K'(\beta):F]|\beta|,$$ where $F_nK'(\beta)$
is the join of $F_n(A)$ and $K'(\beta)$ over $F$. Note that $F_nK'(\beta)$
is the function field of $W_n(A \otimes_F K(\beta))$ which is the
$K({\alpha,\beta})/K'(\beta)$ transfer of $V_n(A \otimes_F
K'(\beta))$. Now by Theorem~\ref{thm4} again, using 
$B'$ instead of~$B$, each 
$$\Schur\left((B' \otimes_{F_n(A)} F_nK'(\beta)) \otimes_{F_nK'(\beta)} 
(A'^{\beta} \otimes_{K'(\beta)} F_nK'(\beta) )\right)$$ 
is the gcd of
$$\Schur\left((B \otimes_F A^{\alpha}) \otimes_{K{(\alpha)}} ( A'^{\beta} \otimes_{F_n K'(\beta)} K({\alpha,\beta}))\right)
[K({\alpha,\beta}):K'(\beta)]|\alpha|$$ and the result follows.

Now suppose we already know that  $D_1/K_1$ and $D_2/K_2$ are
division algebras of degrees $d_i$ and $K_i/F$ is separable of
degree $e_i$. Set $m_i = d_ie_i$ and let $N$ be any multiple of
the lcm of $m_1^2$ and $m_2^2$. Recall that $F'/F$ is called a 
regular field extension if $F'$ is finitely generated as a field 
over $F$, $F'$ is a finite separable extension of a purely 
transcendental extension of $F$, and $F$ is algebraically closed in $F'$. 

\begin{thm}\label{thm6} There is a regular field extension $F' \supset
F$ and a division algebra $E'/F'$ of degree $N$ such that $D_i
\subset E'$ is compatible with $F \subset F'$ for $i = 1,2$.\end{thm}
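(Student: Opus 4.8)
The plan is to realize $E'$ as a base change of the generic division algebra $E_0 := UD(F,N)$, which is central of degree $N$ over its center $Z = Z(F,N)$, and to take $F'$ to be a suitable function field over $Z$. Since $Z/F$ is regular and every field I build over $Z$ will be the function field of a geometrically integral $Z$-variety (a Weil transfer of a generalized Severi--Brauer variety of ideals, which is geometrically rational), transitivity of regularity makes $F'/F$ regular automatically; so that clause is cheap, and the real work is to choose $F'$ so that $E' := E_0 \otimes_Z F'$ stays a division algebra of degree $N$ while acquiring the two embeddings.

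Write $n_i = N/m_i = N/(d_ie_i)$, and note that $m_i^2 \mid N$ forces $m_i \mid n_i$, a divisibility I expect to be decisive. Applying Theorem~\ref{thm4.5} over $F'$ to $D_i \otimes_F F'$ over $K_i \otimes_F F'$ inside $E'$ (of degree $N = n_id_ie_i$), the algebra $D_i$ embeds in $E'$ compatibly with $F \subset F'$ exactly when $(E' \otimes_F K_i) \otimes_{K_i} D_i^{\circ}$ has index dividing $n_i$. So I set $A_i := (E_0 \otimes_F K_i) \otimes_{K_i} D_i^{\circ}$, a central simple algebra over the field $ZK_i := Z \otimes_F K_i$ (this is a field because $K_i/F$ is separable and $Z/F$ is regular), and take $F' := F_{n_1,n_2}(A_1,A_2)$, the join over $Z$ of the two Weil-transfer index-reduction fields described before Theorem~\ref{thm5} (arranging linear disjointness of the two Galois closures as permitted there, if necessary by first passing to generic linearly disjoint copies). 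By the defining property of $F_{n_i}(A_i)$ the index of $A_i \otimes_{ZK_i} K_iF'$ divides $n_i$, and a short manipulation identifies $(E' \otimes_F K_i)\otimes_{K_i}D_i^{\circ}$ with $A_i \otimes_{ZK_i} K_iF'$; this is precisely the embedding criterion, \emph{provided} $E'$ has degree $N$ and $D_i \otimes_F F'$ remains a division algebra of degree $d_i$ (the latter being necessary, since an $F$-embedding $D_i \hookrightarrow E'$ extends to an $F'$-algebra map $D_i\otimes_F F'\to E'$, which is injective as $E'$ is a division ring).

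The crux, and the step I expect to fight hardest for, is that passing to $F'$ must not collapse the index of $E_0$. I would compute $\index(E_0 \otimes_Z F')$ by the double index-reduction formula of Theorem~\ref{thm5}, taken over base $Z$ with $B = E_0$, $(A,K) = (A_1, ZK_1)$ and $(A',K') = (A_2, ZK_2)$: it is the gcd over $\alpha \in R$, $\beta \in R'$ of
$$\Schur\bigl(E_0(\alpha,\beta)\otimes_{K(\alpha,\beta)} \bar A_1^{\alpha}\otimes_{K(\alpha,\beta)} \bar A_2^{\beta}\bigr)\,[K(\alpha,\beta):Z]\,|\alpha|\,|\beta|,$$
where $E_0(\alpha,\beta)=E_0\otimes_Z K(\alpha,\beta)$ and $\bar A_1^{\alpha}, \bar A_2^{\beta}$ denote $A_1^{\alpha}, A_2^{\beta}$ extended to $K(\alpha,\beta)$. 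Since $\index(E')$ divides $N$ trivially, it suffices to show that $N$ divides every such term. Each twisted factor $A_i^{\alpha}$ is a tensor product of Galois conjugates of $A_i = (E_0 \otimes K_i)\otimes D_i^{\circ}$, so the inner algebra always carries a tensor product of conjugates of the generic algebra $E_0$; by Lemma~\ref{bas2} and the fact that $UD(F,N)$ has index equal to exponent $N$, the index of that inner algebra is divisible by $N$ divided only by factors measuring how the relevant powers and the small pieces $D_i,K_i$ can cut it down. The combinatorial multipliers $[K(\alpha,\beta):Z]\mid e_1e_2$ and $|\alpha|,|\beta|$ (assembled from $n_1,n_2$) supply the complementary divisibility, and the hypothesis $\lcm(m_1^2,m_2^2)\mid N$ --- equivalently $m_i \mid n_i$ --- is exactly the numerical slack needed to force $N$ to divide each term. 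The main obstacle is precisely this term-by-term bookkeeping: tracking the orbit data $\{n_{gH}\}$ defining each $A_i^{\alpha}$, reading off the exact shortfall of the inner generic index, and checking that it is always absorbed by $|\alpha|\,[K(\alpha):Z]$ (and the primed analogues). The same genericity analysis, applied with $B$ replaced by $D_i$, also confirms $\index(D_i\otimes_F F') = d_i$, closing the one remaining gap. It is the index-equals-exponent behaviour of $E_0$ that keeps every term at $N$; once that is established, Theorem~\ref{thm4.5} delivers both embeddings.
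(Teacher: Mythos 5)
Your construction is exactly the paper's: the same algebras $A_i = (E \otimes_Z ZK_i) \otimes_{ZK_i} (D_i^{\circ} \otimes_{K_i} ZK_i)$, the same field $F' = F_{n_1,n_2}(A_1,A_2)$, the same use of Theorem~\ref{thm4.5} to convert both embeddings into the conditions $\index(A_i \otimes ZK_iF') \mid n_i$, and the same plan of verifying $\index(E \otimes_Z F') = N$ by the double index-reduction formula of Theorem~\ref{thm5}. But your proposal stops precisely where the proof begins in earnest: you reduce to ``show $N$ divides every term of the gcd'' and then describe the remaining verification as term-by-term bookkeeping, without doing it. That verification is the mathematical content of the theorem, and it runs as follows in the paper. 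First, since $E = UD(F,N)$ is defined over $Z$ it is fixed by both Galois actions, so all conjugates of $E$ occurring in a gcd term collapse into the single power $E^{1+a+b}$, where $a = \sum n_{gH}$ and $b = \sum m_{gH'}$, whose index is $N_{a,b} = N/(N,1+a+b)$; Lemma~\ref{bas2} then factors the term's index as $N_{a,b}$ times the index of the twisted $D_1,D_2$ part. Second, a prime-by-prime trichotomy: if $p \nmid 1+a+b$, then $N_{a,b}$ already carries $p^s$; if $p \mid 1+a+b$, at least one coefficient $n_{gH}$ or $m_{gH'}$ must be prime to $p$, and if at least two are, then $|\alpha||\beta|$ is divisible by $p^{2t_1}$, $p^{t_1+t_2}$ or $p^{2t_2}$ --- this is exactly where the hypothesis $m_i^2 \mid N$ (i.e.\ $2t_i \geq s$, $t_1+t_2 \geq s$) enters. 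Third, the genuinely delicate case of exactly one unit coefficient, which forces $H_{\alpha} \subseteq H$, hence $K(\alpha) \supseteq K$, and is settled by splitting $s = s_1+s_2+s_3$, invoking Lemma~\ref{triv} to compare $\index(D^{\#})$ with $|\alpha'||\beta|$, and using the fact that $\Schur(D_1 \otimes_K K(\alpha,\beta))[K(\alpha,\beta):K]$ is a multiple of $d_1$. None of this appears in your write-up, and the third case in particular is not bookkeeping: it is the reason the statement is true.

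Worse, the one quantitative claim you do make about how the bookkeeping would go points the wrong way: $[K(\alpha,\beta):Z] \mid e_1e_2$ is false in general, since $K(\alpha) = \bar K^{H_{\alpha}}$ and the stabilizer $H_{\alpha}$ can be trivial, so $[K(\alpha):Z]$ can be as large as the order of the full Galois group of $\bar K/Z$. The paper exploits the opposite phenomenon: in the critical case one has $K(\alpha) \supseteq K$, so $[K(\alpha,\beta):F]$ is a \emph{multiple} of $[K:F] = e_1$ and of $[K(\alpha,\beta):K]$, and it is this largeness that supplies the needed powers $p^{s_3}$ and $p^{u_4}$. So the missing step cannot be completed along the lines you sketch. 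A smaller remark: the separate genericity argument you propose to show $\index(D_i \otimes_F F') = d_i$ is unnecessary; once $E'$ is a division algebra of degree $N$ and the index criterion holds, the centralizer of $K_iF'$ in $E'$ is a division algebra of degree $n_id_i$ Brauer-equivalent to $(D_i \otimes_{K_i} K_iF') \otimes A$ with $\deg A = n_i$, which forces $\index(D_i \otimes_{K_i} K_iF') = d_i$ automatically.
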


\proof Set $n_i = N/m_i$, noting that $n_i$ is a multiple of
$m_i$. Set $$E = UD(F,N)$$ and we will extend $Z$ so that
the $D_i$ embed in the base extension of $E$. To achieve this we
set $F' = F_{n_1,n_2}(A_1,A_2)$ where
$$A_i = (D_i^{\circ} \otimes_{K_i} K_iZ) \otimes_{K_iZ} (E \otimes_Z K_iZ).$$
Note that $(E \otimes_Z K_iZ)$ is just the generic division
algebra over $K_i$. Also note that $E' = E \otimes_{Z} F'$ has
both $D_i$ embedded because we have suitably reduced the index of
both $A_i$. The problem is to show that $E'$ is a division
algebra, i.e.,~that $\index(E') = N,$ and for this we apply
Theorem~\ref{thm5}. In applying this theorem, note that the degree
of~$A_1$ is $Nd_1$, and so is a multiple of $n_1$. We make a
similar comment about the degree of $A_2$.

To apply   Theorem~\ref{thm5}, we need to get a handle on
$$\Schur((E \otimes_Z {ZK({\alpha,\beta})})^{1 + \alpha + \beta} \otimes
(D_1^{-\alpha} \otimes_{K(\alpha)} ZK({\alpha,\beta})) \otimes
(D_2^{-\beta} \otimes_{K'(\beta)} ZK({\alpha,\beta}))),$$ where
the unsubscripted tensors are over ${ZK({\alpha,\beta})}$. Write
$\alpha = \sum n_{gH}gH$ and set $a = \sum n_{gH}$ and similarly
for $\beta$ and $b$. Note that $E$ is not moved by either Galois
group so $(E \otimes_Z K({\alpha,\beta})Z)^{1 + \alpha + \beta}$
is $E^{1 + a + b} \otimes_Z K({\alpha,\beta})Z$ which has index
$N/(N,1 + a + b)$ which we define to be $N_{a,b}$. By Lemma~\ref{bas2} the above index is
$$\Schur\left((D_1^{-\alpha} \otimes_{K(\alpha)} K({\alpha,\beta}))\right)
\otimes_{K({\alpha,\beta})} (D_2^{-\beta} \otimes_{K'(\beta)}
K({\alpha,\beta}))N_{a,b}$$ 
and so we want to show that $N$ divides  the
expression
\begin{equation}\label{eq2}\Schur \left((D_1^{\alpha} \otimes_{K(\alpha)} K({\alpha,\beta})\right)
\otimes_{K({\alpha,\beta})} \big(D_2^{\beta} \otimes_{K'(\beta)}
K({\alpha,\beta})\big)[K(\alpha,\beta):F] |\alpha||\beta|N_{a,b}
.
\end{equation}

We show the needed divisibility prime by prime. So assume, for $p$
prime that $p^s$ divides $N$ exactly, in the sense that $\frac
N{p^s}$ is prime to $p$. Likewise, assume that $p^{t_i}$ divides
$n_i$ exactly. Since $N = n_id_ie_i$ and $n_i$ is a multiple of
$d_ie_i$ we have  $2t_i \geq s$ and also $t_1 + t_2 \geq
s$. If $1+a+b$ is prime to $p$ we are done. Thus we assume $p$
divides
\begin{equation}\label{eq3} 1 + \sum_{gH} n_{gH} + \sum_{gH'}
m_{gH'}\end{equation}  and this implies that at least one summand
in 
\begin{equation}\label{eq3.5} \sum_{gH} n_{gh} + \sum_{gH'} m_{gH'}
\end{equation}
is prime to $p$. If any term in~\eqref{eq3.5}, say $n_{gH}$, is prime
to $p$, then $\frac n{(n, n_{gH})}$ is divisible by
$p^{t_1}$. Thus, if two terms in~\eqref{eq3.5} are  prime to
$p$, then $p^{2t_1}$ or $p^{t_1 + t_2}$ or $p^{2t_2}$ divide
$|\alpha||\beta|$ and again we are  done.

Thus we assume that $p$ is prime to exactly one summand in~\eqref{eq3.5}.
Replacing $\alpha$ by $g^{-1}\alpha$,   we  assume that only $n_H$
 is prime to $p$. It follows that $H_{\alpha}$ fixes the trivial coset~$H$ and so
$H_{\alpha} \subseteq H$, implying $K(\alpha) \supseteq K$. Set
$\alpha' = \alpha - n_HH;$ thus, $$|\alpha'| = \prod_{gH \not= H}
\frac n {(n,n_{gH})}.$$ 

We know that $K(\alpha) =
[K(\alpha):K][K:F]$. Write $s = s_1 + s_2 + s_3$ where $p^{s_1}$
is the exact power of $p$ dividing $n$, $p^{s_2}$ is the exact
power dividing $d_1$, and $p^{s_3}$ is the exact power dividing
$e_1$. Note that $p^{s_1}$ divides $\frac n{(n,n_H)}$, and of course
$p^{s_3}$ divides $[K:F]$. Thus it suffices to show that 
$p^{s_2}$ divides
\begin{equation}\label{eq4} \Schur(D'')[K({\alpha,\beta}):K]|\alpha'||\beta|, \end{equation}
where
$$D'' = (D_1^{\alpha} \otimes_{K(\alpha)} K({\alpha,\beta}))
\otimes_{K({\alpha,\beta})} (D_2^{\beta} \otimes_{K'(\beta)}
K({\alpha,\beta})).$$ We will prove in fact that~\eqref{eq4} is divisible
by $d_1$. Note that
$$(D_1^{\alpha} \otimes_{K(\alpha)} K({\alpha,\beta})) =
(D_1 \otimes_K K({\alpha,\beta})) \otimes_{K({\alpha,\beta})}
(D_1^{\alpha'} \otimes_{K(\alpha)} K({\alpha,\beta})).$$

 We need to
estimate  some indices. Of course $D_1$ has index $d_1$, and so
over~$\bar K\bar K'$, $g(D_1)^{n_{gH}}$ has index dividing
$\frac{d_1}{(d_1,n_{gH})}$. Then $D_1^{\alpha'}
\otimes_{K(\alpha)} K({\alpha,\beta})$ has index dividing
$\prod_{gH \not= H} \frac{d_1}{(d_1,n_{gH})}$. Similarly
$D_2^{\beta} \otimes_{K'(\beta)} K({\alpha,\beta})$ has index
dividing $\prod_{gH'} \frac{d_2}{(d_2,m_{gH'})}$.

We need a trivial lemma.

\begin{lem}\label{triv} Suppose $a$ divides $b$. Then $a/(a,d)$ divides
$b/(b,d)$.\end{lem}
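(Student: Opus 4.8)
The plan is to reduce the claim to a comparison of $p$-adic valuations, checking the divisibility one prime at a time. For a prime $p$ write $v_p$ for the $p$-adic valuation; since every quantity in sight is a positive integer, it suffices to show $v_p(a/(a,d)) \le v_p(b/(b,d))$ for all $p$. First I would record the formula $v_p(a/(a,d)) = v_p(a) - \min(v_p(a),v_p(d)) = \max(0,\,v_p(a)-v_p(d))$, and likewise for $b$. The hypothesis $a \mid b$ is then used in exactly one place: it gives $v_p(a) \le v_p(b)$ for every $p$.

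The key step is then purely formal: the function $x \mapsto \max(0,\,x - v_p(d))$ is nondecreasing, so from $v_p(a) \le v_p(b)$ we get $\max(0,\,v_p(a)-v_p(d)) \le \max(0,\,v_p(b)-v_p(d))$, that is, $v_p(a/(a,d)) \le v_p(b/(b,d))$. As this holds for every $p$, the divisibility follows.

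Alternatively, and perhaps more cleanly, I would avoid valuations altogether using the identity $a/(a,d) = \lcm(a,d)/d$, which is immediate from $a\,d = (a,d)\,\lcm(a,d)$. The desired relation $a/(a,d) \mid b/(b,d)$ then becomes $\lcm(a,d) \mid \lcm(b,d)$, and this is just monotonicity of the lcm: since $a \mid b$ and $d \mid d$, every common multiple of $b$ and $d$ is a common multiple of $a$ and $d$, whence $\lcm(a,d) \mid \lcm(b,d)$.

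There is no real obstacle here; the statement is elementary number theory and either route closes in a few lines. The only point requiring a modicum of care is the case where $v_p(d)$ exceeds $v_p(a)$ or $v_p(b)$, so that one of the maxima collapses to $0$; but the monotonicity of $x \mapsto \max(0,\,x - v_p(d))$ handles all such cases uniformly, and in the lcm approach this bookkeeping disappears entirely.
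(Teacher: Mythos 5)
Your first argument is exactly the paper's proof: the paper disposes of the lemma by asserting, prime by prime, that the power of $p$ dividing $a/(a,d)$ is at most the power dividing $b/(b,d)$, which is precisely your valuation computation $v_p(a/(a,d)) = \max(0,\,v_p(a)-v_p(d)) \le \max(0,\,v_p(b)-v_p(d))$ with the monotonicity step made explicit. Your alternative via $a/(a,d) = \lcm(a,d)/d$ is a clean extra route, but the main argument coincides with the paper's, so nothing further is needed.
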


\begin{proof} For any prime $p$, the power of
$p$ dividing $\frac a{(a,d)}$ is less than or equal to the power
of $p$
dividing $\frac b{(b,d)}.$%Write $b = ac$, $a = a'(a,d)$ and $b = b'(b,d)$. Note
%that $(b,d) = z(a,d)$
%where $z$ divides $c$. Then $b = b'z(a,d) = ca'(a,d)$ so $b'z = ca'$ implying
%$b' = (c/z)a'$.

Let $p^{u_1}$ be the exact power of $p$ dividing $\Schur(D_1 \otimes_{K} K({\alpha,\beta}))$ and let 
$$D^{\#} =
(D_1^{\alpha'} \otimes_{K(\alpha)} K({\alpha,\beta}))
\otimes_{K({\alpha,\beta})} (D_2^{\beta} \otimes_{K'(\beta)}
K({\alpha,\beta})).$$ 
Also let $p^{u_2}$ be the exact power of $p$ dividing $\index(D^{\#})$. 
It follows from 
Lemma~\ref{triv} that $p^{u_2}$ divides $|\alpha'||\beta|$. Also, 
$D'' = (D_1^{n_H} 
\otimes_{K} K({\alpha,\beta})) \otimes_{K({\alpha,\beta})}
D^{\#}$. Now let $p^{u_3}$ be the exact power of $p$ 
dividing 
$$\Schur(D_1^{n_H} \otimes_{K}
K({\alpha,\beta})).$$ 
This is the same 
as the exact power of $p$ dividing  
$$\Schur(D_1 \otimes_{K}
K({\alpha,\beta})).$$ 
Set $p^{u_4}$ to be the exact power 
of $p$ dividing $[K(\alpha,\beta):K]$. 
Then $\index(D'')$ is a multiple of $p^{u_1-u_2}$. Thus
$\Schur(D'')|\alpha'||\beta|$ is a multiple of~$p^{u_1}$ and
$$\Schur(D'')|\alpha'||\beta|[K({\alpha,\beta}):K]$$ 
is a multiple
of $p^{u_1 + u_4}$ which is the exact power of $p$ dividing    
$$\Schur\left(D_1 \otimes_{K}
K({\alpha,\beta})\right)[K({\alpha,\beta}):K],$$
 a multiple of $d_1$. This proves Theorem~\ref{thm6}. \end{proof}

Finally let $D'_i/K'_i$ be arbitrary such that both $K_i$ are
finitely generated over a prime or algebraically closed field
$k'$. Let $k$ be the subfield of elements of $K_1$ and $K_2$
algebraic over $k'$. Then $k/k'$ is a finite field extension. In
particular, $k$ is perfect. Then we can write $K'_i \supset F_i$
such that $K_i/F_i$ is finite separable and $F_i/k$ is rational.
Let $F = q(F_1 \otimes_k F_2)$, $K_i = K_i' \otimes_{F_i} F$ and
$D_i = D_i' \otimes_{K_i'} K_i$. We apply Theorem~\ref{thm6} to
the $D_i$ and conclude:

\begin{thm}\label{thm7} Suppose $D_i'/K_i'$ are division algebras with the
$K_i'$ finitely generated over a common field $k'$ which is either
algebraically closed or the prime field. Then the $D_i'$ can be
embedded into a common division algebra $E$ finite over its
center.\end{thm}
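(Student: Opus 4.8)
The plan is to reduce the general situation to the hypotheses of Theorem~\ref{thm6} --- two division algebras whose centers are \emph{finite separable} over one common field $F$ --- and then quote that theorem. All the work lies in manufacturing such an $F$ by a chain of harmless base changes, the point being that finite generation of the $K_i'$ over the perfect field $k'$ is exactly what keeps everything separable and keeps the algebras division.

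First I would pin down the constant fields. Let $k_i$ be the relative algebraic closure of $k'$ in $K_i'$. Since $K_i'$ is finitely generated over $k'$, each $k_i/k'$ is finite, and since $k'$ is perfect (either algebraically closed, or the prime field $\mathbb{Q}$ or $\mathbb{F}_p$), each $k_i$ is perfect and $k_i/k'$ is separable. Fixing a common algebraic closure of $k'$, I embed both $k_i$ in it and form the single finite perfect field $k = k_1 k_2$. Because $k_i$ is algebraically closed in $K_i'$ and $K_i'/k_i$ is separably generated (a finitely generated extension of the perfect field $k_i$), the extension $K_i'/k_i$ is regular, so $K_i' \otimes_{k_i} k$ is a domain, and being finite over $K_i'$ it is a field. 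Replacing $K_i'$ by $K_i' \otimes_{k_i} k$ and $D_i'$ by its base change, I may now assume $k$ is a common perfect constant field, algebraically closed in each $K_i'$.

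Now, since $k$ is perfect, each finitely generated $K_i'/k$ has a separating transcendence basis, so I may choose a purely transcendental (rational) intermediate field $k \subseteq F_i \subseteq K_i'$ with $K_i'/F_i$ finite separable. Put $F = q(F_1 \otimes_k F_2)$; as $F_1,F_2$ are rational over $k$ the tensor product is a domain and $F$ is rational over $k$, whence $F/F_i$ is purely transcendental for each $i$. Setting $K_i = K_i' \otimes_{F_i} F$ and $D_i = D_i' \otimes_{K_i'} K_i = D_i' \otimes_{F_i} F$, three things must be verified. Since $F/F_i$ is purely transcendental, $K_i = K_i' \otimes_{F_i} F$ is a finite-dimensional domain over $F$, hence a field, and $K_i/F$ is finite separable because separability is stable under base change; thus $K_i = K_i'\cdot F$ is purely transcendental over $K_i'$. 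Finally the index of a division algebra is unchanged under a purely transcendental extension of its center, so $D_i$ remains a division algebra with $\deg D_i = \deg D_i'$ and center $K_i$.

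With the $D_i/K_i$ now division algebras whose centers $K_i$ are finite separable over the common field $F$, Theorem~\ref{thm6} furnishes a regular extension $F' \supset F$ and a division algebra $E'=E$ of degree $N$, finite over its center $F'$, into which each $D_i$ embeds compatibly with $F \subset F'$. As the canonical maps $D_i' \hookrightarrow D_i' \otimes_{K_i'} K_i = D_i$ are injective, both $D_1'$ and $D_2'$ embed in $E$, giving the desired common division algebra finite over its center. The genuine difficulty --- the double index-reduction estimate --- has already been absorbed into Theorem~\ref{thm6} (via Theorem~\ref{thm5}); the only real point to watch here is that perfectness of $k'$ together with finite generation of the $K_i'$ is precisely what lets one (a) choose a common separable constant field $k$ and (b) produce the separating bases $F_i$, and it is exactly these hypotheses that fail for the center-incompatible counterexample of Section~1.
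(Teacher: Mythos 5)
Your proposal follows the paper's skeleton---choose separating transcendence bases $F_i \subseteq K_i'$ over a perfect field of constants, pass to $F = q(F_1 \otimes_k F_2)$, base-change, and invoke Theorem~\ref{thm6}---but it diverges at one step, and that step is fatal. You take $k = k_1k_2$ to be the \emph{compositum} of the two constant fields and replace each $D_i'$ by $D_i' \otimes_{k_i} k$. This base change is a \emph{finite} (not purely transcendental) extension of the center $K_i'$, and a finite extension of the center can reduce the index: $D_i' \otimes_{k_i} k$ need not be a division algebra, so Theorem~\ref{thm6}, whose hypotheses require division algebras $D_i/K_i$, no longer applies. Concretely, take $k' = \Q$, let $D_1'$ be the quaternion algebra $(-1,t)$ over $K_1' = \Q(t)$ (a division algebra, since $t$ is not a sum of two squares in $\Q(t)$ by a degree count), and let $D_2'$ be any division algebra over $K_2' = \Q(i)(s)$. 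Then $k_1 = \Q$, $k_2 = \Q(i)$, your $k$ is $\Q(i)$, and $D_1' \otimes_{\Q(t)} \Q(i)(t) \cong M_2(\Q(i)(t))$ is split. Moreover the failure cannot be patched within your strategy: if $E$ were any division algebra with $i \in Z(E)$ and $D_1' \subseteq E$, the multiplication map $D_1' \otimes_{\Q} \Q(i) \to E$ would be injective (its source is the simple ring $M_2(\Q(i)(t))$ and the map is nonzero), so a matrix algebra would embed in a division ring, which is absurd. Thus no division algebra containing this $D_1'$ can have your $k$ in its center, while your construction forces $k \subseteq F \subseteq F' = Z(E)$.

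The paper goes in the opposite direction: $k$ is taken to be the \emph{shared} algebraic part of $K_1'$ and $K_2'$ over $k'$---a field that embeds into both $K_i'$ (in the example above, $\Q$ itself)---so that no new constants are ever adjoined to either $K_i'$. The only extension of $K_i'$ that is performed is $K_i = K_i' \otimes_{F_i} F$, which is purely transcendental over $K_i'$ and hence preserves both the division property and the degree; this is exactly the fact you cite, but you apply it only to the second base change and not to the extra first one you introduced. Your instinct that matching the constant fields matters is not baseless: something of that kind is what makes the linear-disjointness hypothesis of Theorem~\ref{thm5} (assumed there ``for convenience'') plausible in the application, a point the paper itself treats lightly. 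But the correction must \emph{shrink} $k$ to a field common to both $K_i'$, never enlarge it to the compositum.
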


The question remains: What is the lowest possible bound for $\deg
E$?

%\leftline{References}

\end{document}